\begin{document}
\renewcommand{\theequation}{\thesection.\arabic{equation}}
\newtheorem{defi}{Definition}
\newtheorem{lem}{Lemma}
\newtheorem{prop}{Proposition}
\newtheorem{cor}{Corollary}
\newtheorem*{re}{Remark}
\newtheorem{thm}{Theorem}
\newtheorem{conj}{Conjecture}
\def\R{{\mathbb R}}
\def\N{{\mathbb N}}
\def\Z{{\mathbb Z}}
\def\IQ{{\mathbb{Q}}}
\def\IC{{\mathbb{C}}}
\makeatletter
\def\imod#1{\allowbreak\mkern5mu({\operator@font mod}\,\,#1)}
\makeatother
\renewcommand{\Re}{\mathop{\rm Re}}
\renewcommand{\Im}{\mathop{\rm Im}}
\newcommand{\chip}{\chi_{\mathbb{P}}}
\newcommand{\lcm}{\text{lcm}}
\renewcommand{\i}{\text{ for each $i$}}
\numberwithin{equation}{section}

\title{A generalization of Bombieri-Vinogradov theorem and its application}
\author{Peter Cho-Ho Lam}
\address{\noindent Department of Mathematics, Simon Fraser University,
Burnaby B.C., Canada\\  V5A 1S6.
Email:  {\tt chohol@sfu.ca}}
\date{\today}
\subjclass[2010]{11N35, 11N32}

\keywords{Binary Quadratic Forms, Gaussian Primes}

\begin{abstract} In this paper we establish a generalization of Bombieri-Vinogradov theorem for primes represented by a fixed positive definite binary quadratic form. Then we apply this theorem to generalize a result of Vatwani on bounded gap between Gaussian primes.
\end{abstract}

\maketitle

\section{Introduction}
Let $\pi(x)$ be the number of primes less than $x$ and $\pi(x; q, a)$ be the number of primes $p$ less than $x$ that satisfies $p\equiv a\imod q$. It is widely believed that primes are equdistributed over arithmetic progressions; that is
\begin{equation}\label{equi}
\pi(x; q, a)\sim\frac{\pi(x)}{\phi(q)}\hspace{5mm}\text{as $x\rightarrow\infty$,}
\end{equation}
at least when $q$ is not too close to $x$. For example, the celebrated Siegel-Walfisz theorem states that \eqref{equi} is true when $q\le (\log x)^N$ where $N$ is a fixed positive constant. Assuming GRH, one can prove that \eqref{equi} holds for all $q\le x^{1/2-\epsilon}$ for any $\epsilon>0$. We are very far from proving this unconditionally, but the Bombieri-Vinogradov theorem asserts that this is true on average: for any $0<\theta<1/2$ and $A>0$, we have
\[
\sum_{q<x^\theta}\max_{\substack{a\imod q\\(a, q)=1}}\max_{y\le x}\bigg|\pi(y; q, a)-\frac{\pi(y)}{\phi(q)}\bigg|\ll_A x(\log x)^{-A}.
\]
Such theorem is very crucial to many studies of primes that involve the use of sieve methods.

Some recent  applications of Bombieri-Vinogradov theorem include the works of Maynard \cite{May} and Tao on bounded gaps between primes. This stimulated many research on bounded gaps between primes in special subsets; for example, Thorner \cite{Th} proved that if $Q(x, y)$ is a primitive positive definite binary quadratic form, then an analogous bounded gap result is true for the set of primes represented by $Q$: there is a constant $c(Q)>0$ such that there exist infinitely many primes $p_1, p_2$ such that $|p_1-p_2|\le c(Q)$ and both $p_1, p_2$ can be represented by $Q$. Later Vatwani \cite{Vat} generalized his result for the case $Q(x, y)=x^2+y^2$ which allows the first argument of $Q$ to be fixed.
\begin{thm}[Vatwani]
There are infinitely many primes of the form $p_1=a^2+b^2$ and $p_2=a^2+(b+h)^2$, with $a, b, h\in\Z$, such that $0<|h|\le 246$.
\end{thm}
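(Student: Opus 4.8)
\emph{Overview and first reduction.} The plan is to run the Maynard--Tao multidimensional sieve inside the ring of Gaussian integers $\Z[i]$, feeding it the generalized Bombieri--Vinogradov theorem proved above as its arithmetic input; the shared term $a^2$ then comes out for free, because translating a Gaussian integer by a rational integer does not change its imaginary part. First I would fix a tuple $\mathcal{H}=\{h_1,\dots,h_k\}\subset\Z$ that is admissible in the usual sense, with $k=50$ and $\mathrm{diam}(\mathcal{H})=246$, and range over Gaussian integers $z=b+ai$ with $a$ and $b$ positive of size $\asymp\sqrt X$, so that $N(z+h_i)=(b+h_i)^2+a^2\asymp X$. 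If for some such $z$ at least two of $z+h_1,\dots,z+h_k$ are Gaussian primes, say $z+h_i$ and $z+h_j$ with $i\neq j$, then $p_1:=N(z+h_i)$ and $p_2:=N(z+h_j)$ are rational primes (a Gaussian integer of prime norm is prime, and conversely here because $a\neq0$ and $b+h_i\neq0$), and putting $b'=b+h_i$ and $h=h_j-h_i$ gives $p_1=a^2+b'^2$ and $p_2=a^2+(b'+h)^2$ with $0<|h|\le 246$. Because the $h_i$ are rational, the local conditions of the sieve at the primes of $\Z[i]$ collapse to the classical ones: at a split prime over $p\equiv1\imod4$ the residue field is $\mathbb{F}_p$ and one needs $\{h_i\bmod p\}\neq\mathbb{F}_p$; at the ramified prime $1+i$ one needs the $h_i$ of a single parity; at an inert prime the $h_i$ lie in the prime subfield $\mathbb{F}_p\subsetneq\mathbb{F}_{p^2}$, so nothing is required. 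Hence any classically admissible $\mathcal{H}$ suffices.

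\emph{The sieve sum.} Next I would form, with $z$ over the above region,
\[
S=\sum_{z}\Bigl(\sum_{i=1}^{k}\chip\bigl(N(z+h_i)\bigr)-1\Bigr)\Bigl(\sum_{\mathfrak d_i\mid z+h_i\ (1\le i\le k)}\lambda_{\mathfrak d_1,\dots,\mathfrak d_k}\Bigr)^2,
\]
where the $\lambda_{\mathfrak d_1,\dots,\mathfrak d_k}$ are the Maynard-type weights attached to a smooth $F$ on the simplex $\{t\in[0,1]^k:\sum_i t_i\le1\}$, supported on $k$-tuples of Gaussian ideals with $N(\mathfrak d_1\cdots\mathfrak d_k)\le R:=X^{\theta/2-\delta}$ and coprime to $1+i$ and to the inert primes, none of which can divide $N(z+h_i)$ once that number is an odd rational prime. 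Since the weights are nonnegative, $S>0$ forces some $z$ in the region to have at least two of the $N(z+h_i)$ prime, which by the previous paragraph is exactly what we want; letting $X\to\infty$ then yields infinitely many pairs.

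\emph{Evaluating the sum.} The diagonal term $S_1=\sum_z(\sum\lambda)^2$ I would evaluate by the Gaussian analogue of Selberg's diagonalization together with a lattice-point count over the region, getting $S_1=(1+o(1))\,cX(\log R)^k I_k(F)$ with $c>0$ (a Gaussian singular series times a volume constant). For $S_2=\sum_z\bigl(\sum_m\chip(N(z+h_m))\bigr)(\sum\lambda)^2$, after squaring out and sorting $z$ into residue classes modulo the least common multiple of the occurring ideals, the problem reduces to counting $z$ in the region, in a fixed Gaussian arithmetic progression, with $N(z+h_m)$ prime, summed over moduli of norm $\le R^2\le X^\theta$; this is exactly what the generalized Bombieri--Vinogradov theorem above delivers, applied with $Q(x,y)=x^2+y^2$ (whose represented primes are precisely the norms of Gaussian primes), for any fixed $\theta<1/2$. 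It produces $S_2=(1+o(1))\,cX(\log R)^{k+1}(\log X)^{-1}\sum_m J_k^{(m)}(F)$ with the \emph{same} $c$, whence
\[
\frac{S}{S_1}=(1+o(1))\Bigl(\frac{\log R}{\log X}\cdot\frac{\sum_m J_k^{(m)}(F)}{I_k(F)}-1\Bigr).
\]
Since $\log R/\log X\to\theta/2$, one can choose $F$ so that $S>0$ as soon as $M_k:=\sup_F\sum_m J_k^{(m)}(F)/I_k(F)>2/\theta$; letting $\theta\to1/2$ this only asks $M_k>4$, which holds for $k=50$ by the Polymath bound $M_{50}>4$, and since a $50$-element admissible tuple of diameter $246$ exists the theorem follows.

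\emph{The main obstacle.} The genuinely hard ingredient is the generalized Bombieri--Vinogradov theorem itself, which here is taken as already proved. Granting it, the remaining work is the Gaussian sieve bookkeeping: one must compute the local densities correctly at the ramified prime $1+i$ and at the inert primes $p\equiv3\imod4$ — where $p$ is forced not to divide $N(z+h_i)$, which enlarges the singular series and must be inserted identically into $S_1$ and $S_2$ so that the constant $c$ genuinely cancels in the ratio — and one must establish the lattice-point asymptotics over the region uniformly in the Gaussian progressions that arise, so that the two sums reproduce the classical Maynard main terms term by term.
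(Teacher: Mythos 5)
Your overall strategy -- running the Maynard--Tao sieve over $\Z[i]$ with rational shifts $h_i$ so that the common $a^2$ survives, and feeding it the level-$\theta$ Bombieri--Vinogradov input for $Q(x,y)=x^2+y^2$ -- is essentially Vatwani's original argument, and it is close in spirit to what this paper does in Section 3 for general $Q$ (note the paper itself does not reprove Theorem 1: it cites Vatwani, and its own Section 3 machinery, which uses only $\sup_{\mathcal F} kJ_k(\mathcal F)/I(\mathcal F)\gg\log k$, yields an unspecified constant $c(Q)$ rather than $246$). Up to the bookkeeping you flag (translating ideal moduli into the rational congruence conditions of Proposition \ref{BVbqf}, and matching the archimedean and $W$-dependent constants in $S_1$ and $S_2$), that part of your plan is sound.

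The genuine gap is the final numerical step. The condition you derive is $M_k>2/\theta$ with $\theta<1/2$, i.e.\ $M_k>4$, and you invoke ``the Polymath bound $M_{50}>4$''. That bound is false: Polymath's upper bound $M_k\le\frac{k}{k-1}\log k$ gives $M_{50}\le\frac{50}{49}\log 50\approx 3.99<4$. What Polymath actually proves is $M_{54}>4.00238$, which with the narrowest admissible $54$-tuple gives diameter $270$, not $246$. The bound $246$ comes from $k=50$ together with the refined variational quantity on the $\epsilon$-enlarged simplex (the ``vanishing marginals'' / $\epsilon$-trick of Polymath), and implementing that refinement is not free: the sieve support then exceeds the simplex $\sum_i t_i\le 1$, and one must justify the corresponding $S_2$-type sums with moduli slightly beyond what a direct application of the Bombieri--Vinogradov input covers on the slice $t_m=0$; Vatwani's paper carries out exactly this adaptation in the Gaussian setting. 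As written, your argument proves bounded gaps with $|h|\le 270$ (or whatever diameter the $k=54$ tuple forces), but not the stated $246$; to close the gap you must either import and verify the enlarged-simplex variant in $\Z[i]$ or settle for the weaker explicit constant.
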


Vatwani mentioned that the same result should hold for all imaginary quadratic field of class number one (e.g. for $Q(x, y)=x^2+163y^2$) with minor changes in the proof. This is because the corresponding ring of integers would still enjoy unique factorization and are therefore easier to work with. In this paper, we will prove a generalization of Bombieri-Vinogradov theorem that allows us to generalize Theorem 1 to most primitive positive definite binary quadratic forms:
\begin{prop}
\label{BVbqf}
Let $Q(x, y)=ax^2+bxy+cy^2$ be a primitive positive definite binary quadratic form with discriminant $-\Delta$ such that $8\nmid\Delta$ and $\Delta$ is not divisible by any odd prime squares. Then for any $A>0$ and any $\theta<1/2$, we have
\[
\sum_{q<x^\theta}\max_{\substack{u, v\imod q\\(Q(u, v), q)=1}}\max_{y\le x}\bigg|\sum_{\substack{Q(m, n)\le y\\m\equiv u\imod q\\n\equiv v\imod q}}\chip(Q(m, n))-\frac{\pi(y, Q)}{\nu(q)}\bigg|\ll_A x(\log x)^{-A}.
\]
where $\chip$ is the characteristic function for primes,
\[
\pi(x, Q)=\{(u, v)\in\mathbb{Z}^2, F(m, n)\le x\text{ and }Q(m, n)\text{ is prime.}\}
\]
and $\nu(q)$ is the number of $u, v\imod q$ such that $(Q(u, v), q)=1$.
\end{prop}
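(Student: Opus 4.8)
The plan is to pass from values of $Q$ to prime ideals of the imaginary quadratic field $K=\IQ(\sqrt{-\Delta})$, and then to establish the estimate as a Bombieri--Vinogradov theorem over $K$, following the classical template (a Vaughan or Heath--Brown identity, the large sieve, and Siegel--Walfisz). For \emph{Step 1}, let $\mathcal O$ be the order of discriminant $-\Delta$ in $K$. The hypotheses $8\nmid\Delta$ and ``no odd prime square divides $\Delta$'' force $\mathcal O$ to be either the maximal order or the order of conductor $2$ (the latter exactly when $-\Delta$ is not a fundamental discriminant), and this is precisely the range in which the classical dictionary between $SL_2(\Z)$-classes of primitive forms of discriminant $-\Delta$ and the proper $\mathcal O$-ideal class group --- along with the behaviour at the finitely many primes dividing $2\Delta$ --- stays entirely transparent. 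Fix a proper ideal $\mathfrak a$ with $\Z$-basis $\omega_1,\omega_2$ representing the class of $Q$, normalised so that $Q(s,t)=N(s\omega_1+t\omega_2)/N\mathfrak a$. Then, for $p\nmid 2\Delta N\mathfrak a$, a representation $Q(m,n)=p$ corresponds, via $\alpha:=m\omega_1+n\omega_2$, to a factorisation $(\alpha)=\mathfrak a\mathfrak p$ with $\mathfrak p$ a degree-one prime of norm $p$ in the fixed ideal class $[\mathfrak a]^{-1}$, and the congruences $m\equiv u$, $n\equiv v\imod q$ say exactly that the generator $\alpha$ of $\mathfrak a\mathfrak p$ lies in a prescribed residue class modulo $q\mathfrak a$. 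Since $\mathcal O^{\times}$ is finite and the substitution $\mathfrak p\leftrightarrow\overline{\mathfrak p}$ costs only a bounded factor, counting the admissible $(m,n)$ amounts --- up to the negligible primes dividing $2\Delta q$ --- to counting degree-one primes $\mathfrak p$ of norm $\le y$ lying in a fixed ideal class and in a fixed ray class of $\mathcal O$ modulo $q\mathcal O$; and for $q$ coprime to $N\mathfrak a$ one has $\nu(q)=|(\mathcal O/q\mathcal O)^{\times}|$, so $\pi(y,Q)/\nu(q)$ is the correctly normalised main term on the ideal side (up to the bounded constant from $\mathcal O^{\times}$ and the class number), the $q$ sharing a factor with $N\mathfrak a$ needing only a routine adjustment of the local factors.

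For \emph{Step 2}, Step 1 reduces the Proposition to
\[
\sum_{q<x^{\theta}}\ \max_{\mathfrak c}\ \max_{y\le x}\ \Bigl|\,\pi_K(y;q\mathcal O,\mathfrak c)-\frac{\pi_K(y)}{h_q}\,\Bigr|\ \ll_A\ x(\log x)^{-A},
\]
where $\pi_K(y;q\mathcal O,\mathfrak c)$ counts degree-one primes of norm $\le y$ in the ray class $\mathfrak c$ modulo $q\mathcal O$ and $h_q=|\mathrm{Cl}_{q\mathcal O}|$ --- the maximum over admissible $(u,v)$ being absorbed by that over $\mathfrak c$, the restriction to one ideal class together with the unit and conjugation factors dividing both sides by the same bounded constant, and ramified primes and higher prime powers contributing $O(\sqrt x\log x)$ per modulus. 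This I would treat exactly as in the rational case: expand the ray-class indicator into Hecke characters modulo $q\mathcal O$, peel off the principal character (which produces the main term), and apply a Vaughan or Heath--Brown identity to the ideal-theoretic von Mangoldt function in the remainder, obtaining $O((\log x)^{c})$ Type I sums $\sum_{N\mathfrak d\le D}\sum_{N\mathfrak e}(\cdots)$ and Type II bilinear sums $\sum_{M<N\mathfrak m\le 2M}\sum_{N\mathfrak n}a_{\mathfrak m}b_{\mathfrak n}(\cdots)$ over integral ideals. The Type I sums are dispatched by the P\'olya--Vinogradov inequality for Hecke characters and, for moduli $q\le(\log x)^{B}$, by the Siegel--Walfisz theorem for Hecke $L$-functions (the classical zero-free region and Siegel's theorem on exceptional real zeros); the Type II sums by Cauchy--Schwarz followed by the large sieve inequality for $K$. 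The reason the cutoff stays at $\theta<1/2$ rather than something weaker is structural: every Hecke character modulo $q\mathcal O$ factors as $(\psi\circ N_{K/\IQ})\cdot\lambda$ with $\psi$ a Dirichlet character and $\lambda$ anticyclotomic (trivial on norms), so the $(\psi\circ N)$-part reduces those contributions to the classical Bombieri--Vinogradov theorem, while the anticyclotomic characters of conductor dividing $q\mathcal O$ number only $O(q)$ for each $q$, hence $O(x^{2\theta})$ in all --- so the large sieve one actually needs is as strong as the rational one. Summing over $q<x^{\theta}$ and optimising the Vaughan parameters then yields the claimed bound.

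The principal obstacle is the Type II / large-sieve half of Step 2 over ideals: one needs the large sieve inequality for $K$ with exponents good enough that, after the Vaughan decomposition of the ideal-theoretic $\Lambda$, bilinear sums over ideals of norm up to $x$ are controlled for all moduli $q$ below $x^{1/2-\varepsilon}$, all the while keeping the geometric constraint $Q(m,n)\le y$ (equivalently $N\mathfrak p\le y$) in force and treating the $h(\mathcal O)$ ideal classes uniformly; isolating the anticyclotomic part and showing it is sparse enough for the large sieve to bite is where the real analytic work lies. A secondary, essentially algebraic, difficulty is making Step 1 watertight --- the precise correspondence between primitive forms of discriminant $-\Delta$ and ideals of $\mathcal O$, including the conductor-$2$ order and the anomalous prime $2$ --- which is exactly why the hypotheses $8\nmid\Delta$ and ``$\Delta$ divisible by no odd prime square'' are imposed.
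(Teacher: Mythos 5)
Your Step 1 is essentially the paper's own reduction. The paper fixes $\alpha_1=a$, $\alpha_2=(-b+\sqrt{b^2-4ac})/2$, sets $\mathfrak{a}=(\alpha_1,\alpha_2)$ with $N\mathfrak{a}=a$, writes $aQ(m,n)=N((\alpha_1m+\alpha_2n))$, and proves two lemmas: that $m\equiv\pm u$, $n\equiv\pm v\ (\mathrm{mod}\ q)$ holds if and only if $(\alpha_1m+\alpha_2n)$ and $(\alpha_1u+\alpha_2v)$ lie in the same ray class of $C_{(q)}$ (the sign ambiguity giving the bounded factor $2$ you attribute to units and conjugation), and that $\nu(q)=\phi((q))$. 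Your dictionary between representations $Q(m,n)=p$, degree-one primes $\mathfrak p$ in the class $[\mathfrak a]^{-1}$, and prescribed residue classes of the generator modulo $q$ is the same mechanism, phrased through the order of discriminant $-\Delta$ rather than through the explicit ideal $\mathfrak a$; that difference is cosmetic.

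Step 2 is where you part company with the paper, and where your proposal is not a proof. The paper does no analytic work at this stage: it simply invokes Huxley's Bombieri--Vinogradov theorem for number fields and sums the resulting error terms. You instead propose to re-derive the needed mean-value theorem from scratch (Vaughan/Heath--Brown identity for the ideal von Mangoldt function, Siegel--Walfisz for Hecke $L$-functions, large sieve for the Type II sums, plus the factorization of ray class characters modulo $q\mathcal O$ into a Dirichlet character composed with the norm times an anticyclotomic character). As written this is a program: the Type II/large-sieve estimate over ideals and the quantitative treatment of the anticyclotomic characters --- which you yourself identify as ``where the real analytic work lies'' --- are exactly the content of the theorem and are nowhere established, so there is a genuine gap at the decisive step. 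I will add, however, that your instinct that something beyond the off-the-shelf number-field theorem is required is well founded. The moduli occurring here are the principal ideals $(q)$ with $N((q))=q^2$, so Huxley's theorem as quoted in the paper (a sum over $N\mathfrak q\le x^{\theta}$ with $\theta<1/2$) only reaches rational moduli $q\le x^{\theta/2}$; taken literally, the paper's one-line majorization of the sum over $q<x^{\theta}$ by Huxley's sum therefore only justifies the proposition for $\theta<1/4$, and a decomposition of the type you sketch (norm-pullback plus anticyclotomic, as in Vatwani's treatment of $\Z[i]$) is precisely the kind of extra input needed to push the rational-modulus level toward $1/2$. So either complete that character-decomposition argument in full, or follow the paper's route, cite Huxley, and settle for the smaller exponent --- which still suffices for the bounded-gaps application in Section 3, at the cost of a worse constant $c(Q)$.
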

Applying Proposition \ref{BVbqf}, we are able to prove that
\begin{thm}
\label{Vat}
Let $Q(x, y)=ax^2+bxy+cy^2$ be a primitive positive definite binary quadratic form with discriminant $-\Delta$ such that $8\nmid\Delta$ and $\Delta$ is not divisible by any odd prime squares. Then there is a constant $c(Q)>0$ such that there are infinitely many primes of the form $p_1=Q(m, n)$ and $p_2=Q(m, n+h)$, with $m, n, h\in\Z$, such that $0<|h|\le c(Q)$.
\end{thm}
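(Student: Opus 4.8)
I will run the Maynard--Tao sieve \cite{May} over the lattice points $(m,n)$ at which many of the shifted values $Q(m,n+h_i)$ are simultaneously prime, feeding Proposition~\ref{BVbqf} into the argument in place of the classical Bombieri--Vinogradov theorem, in the spirit of Thorner \cite{Th} and Vatwani \cite{Vat}; the new feature is that the admissible tuple $\mathcal H=\{h_1<\dots<h_k\}$ enters the \emph{second} argument of $Q$ only, so $p_1$ and $p_2$ share the first argument, and this is precisely why the two-dimensional congruence form of Proposition~\ref{BVbqf} is the right input. First I fix $k$ and a $k$-tuple $\mathcal H$ that is $Q$-\emph{admissible}: for every prime $p$ there is a pair $(m,n)\imod{p}$ with $p\nmid Q(m,n+h_i)$ for all $i$. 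For $p>k$ this is automatic; for $p\le k$ and for the finitely many primes dividing $2\Delta$ it is a finite list of congruence conditions on $\mathcal H$, met by the Chinese Remainder Theorem together with the standard constructions of narrow admissible tuples, and $c(Q):=h_k-h_1$ for the resulting tuple (for $Q=x^2+y^2$ one may take $c(Q)=246$, recovering Theorem~1). With $X$ large, $W=\prod_{p\le D_0}p$ for $D_0\to\infty$ slowly and $(b_1,b_2)$ in the good residue set modulo $W$, I form
\[
S=\sum_{\substack{Q(m,n)\le X\\ (m,n)\equiv(b_1,b_2)\imod{W}}}\Bigg(\sum_{i=1}^{k}\chip\big(Q(m,n+h_i)\big)-1\Bigg)w_{m,n},\qquad w_{m,n}=\Bigg(\sum_{\mathbf d}\lambda_{\mathbf d}\Bigg)^{\!2},
\]
where the inner sum runs over tuples $\mathbf d=(d_1,\dots,d_k)$ of integral ideals of $\mathcal O_K$, $K=\IQ(\sqrt{-\Delta})$, with $d_i$ dividing the integral ideal of norm $Q(m,n+h_i)$ attached to that representation, each $d_i$ coprime to $W$, and $\prod_i N(d_i)<R:=X^{\theta/2-\delta}$, and the $\lambda_{\mathbf d}$ are the usual Maynard weights built from a fixed smooth $F$ on the simplex. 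If $S>0$ for arbitrarily large $X$, then some admissible $(m,n)$ makes at least two of the $Q(m,n+h_i)$ prime; relabelling gives primes $p_1=Q(m,n')$ and $p_2=Q(m,n'+h)$ with $0<|h|\le c(Q)$, which is Theorem~\ref{Vat}.

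Write $S=S_2-S_1$ with $S_1=\sum w_{m,n}$ and $S_2=\sum_{i=1}^{k}\sum\chip(Q(m,n+h_i))w_{m,n}$. Expanding the squares and interchanging summation, each term of $S_1$ counts the $(m,n)$ in the ellipse $Q(m,n)\le X$ whose attached ideals lie in a prescribed class modulo $\mathfrak q:=\prod_i[d_i,e_i]$; since such a class is a coset in $\Z^2$ of a sublattice of index $N(\mathfrak q)$, lattice-point counting gives the main term $\tfrac{2\pi X}{\sqrt\Delta}\cdot\tfrac{1}{N(\mathfrak q)}$ times a local factor, with a boundary error that is negligible because $N(\mathfrak q)<R^{2}\le X^{\theta}\ll\sqrt X$. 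In $S_2$ the coprimality forces $d_i=e_i=1$, so each surviving term counts \emph{primes} $Q(m,n+h_i)$ whose attached ideal lies in a \emph{single} class modulo $\mathfrak q=\prod_{j\ne i}[d_j,e_j]$, an ideal of norm $<R^{2}\le X^{\theta}$; writing that class as a union of point-classes $(m,n+h_i)\equiv(u,v)\imod{q''}$ with $q''=\mathfrak q\cap\Z$ and summing Proposition~\ref{BVbqf} over them gives the expected main term together with a remainder that, after a Cauchy--Schwarz argument balancing the divisor-type loss against the power-of-$\log$ saving in Proposition~\ref{BVbqf}, is $\ll X(\log X)^{-A}$ for every fixed $A$ whenever $\theta<1/2$. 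Assembling the main terms by Maynard's change of variables to the $y_{\mathbf r}$ then yields
\[
S_1\sim C\,(\log R)^{k}I_k(F),\qquad S_2\sim C\,\frac{(\log R)^{k+1}}{\log X}\sum_{i=1}^{k}J_k^{(i)}(F),
\]
where $C>0$ is proportional to $X$ and to a singular series $\mathfrak S(Q,\mathcal H)$ and $I_k,J_k^{(i)}$ are Maynard's functionals of $F$. The Euler factor of $\mathfrak S(Q,\mathcal H)$ at a prime $p\nmid2\Delta$ reflects the splitting of $p$ in $K$: an inert $p$ contributes essentially $1$, while a split $p$ contributes, through its two prime ideals, two classical-looking linear-sieve factors; because each rational prime yields one or two degree-one prime ideals, the product has a pole of order exactly $k$ at $s=1$ --- so the variational problem is \emph{verbatim} Maynard's --- and it is positive precisely under the hypotheses $8\nmid\Delta$ and ``$\Delta$ has no odd square factor'', which are also the hypotheses that make $\nu$ multiplicative and underlie Proposition~\ref{BVbqf}.

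Finally, since $\pi(X,Q)\asymp X/\log X$ and $\log R=(\tfrac{\theta}{2}-\delta)\log X$, the optimal $F$ gives $S_2/S_1\sim(\tfrac{\theta}{2}-\delta)M_k$ with $M_k=\sup_F\sum_i J_k^{(i)}(F)/I_k(F)$, so $S>0$ as soon as $M_k>2/\theta$; letting $\theta\uparrow1/2$ this needs $M_k>4$, which holds for $k=50$ (the bound $M_{50}>4$ behind the value $246$), and then $c(Q)$ is the diameter of a narrowest $Q$-admissible $50$-tuple. Running $X\to\infty$ proves Theorem~\ref{Vat}. I expect the two delicate points to be: first, the local computation just sketched --- checking that the Euler product for $\mathfrak S(Q,\mathcal H)$ converges to a positive number with pole of order exactly $k$, so that Maynard's numerics (in particular $k=50$) carry over unchanged, using the splitting of primes in $K$ and the hypotheses on $\Delta$ at the ramified primes and at $2$; and second, the passage in $S_2$ from the ideal-theoretic divisibility conditions back to the rational congruences of Proposition~\ref{BVbqf} --- one must show that the combinatorial loss in expressing an ideal class as a union of point-classes is swallowed by the saving in Proposition~\ref{BVbqf}, which is exactly where the full strength of that proposition is used, and is arguably the crux of the argument.
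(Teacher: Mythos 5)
Your overall strategy (Maynard--Tao weights over lattice points, $W$-trick, Proposition~\ref{BVbqf} as the level-of-distribution input) is the same as the paper's, but two of the steps you yourself flag as ``delicate'' are genuine gaps, and one of them is an actual error rather than a missing verification. First, you index the sieve weights by tuples of \emph{integral ideals} of $\mathcal{O}_K$ dividing the ideal attached to $Q(m,n+h_i)$, and then must translate the resulting divisibility conditions modulo an ideal $\mathfrak q$ of norm up to $R^2$ into the rational congruences $(m,n)\equiv(u,v)\imod{q}$ that Proposition~\ref{BVbqf} actually handles. This is not a routine bookkeeping step: an ideal class modulo $\mathfrak q$ decomposes into roughly $(\mathfrak q\cap\Z)^2/N(\mathfrak q)$ point-classes modulo $q''=\mathfrak q\cap\Z$, Proposition~\ref{BVbqf} only controls a \emph{maximum} over classes on average over $q$, and the Cauchy--Schwarz ``balancing'' you invoke is precisely the argument that would have to be supplied. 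The paper avoids this entirely by taking the $d_i$ to be \emph{rational} integers dividing $Q(m+g_i,n+h_i)$: after the CRT the inner sum is over $(m,n)$ in a single congruence class modulo $W\prod_i[d_i,e_i]$, which is exactly the shape of Proposition~\ref{BVbqf}, and the error analysis is the standard Bombieri--Vinogradov one. If you keep ideal-valued weights you are essentially redoing Vatwani's class-number-one argument \cite{Vat}, and the passage back to rational moduli is the whole difficulty for general $\Delta$; as written, your proof does not contain it.

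Second, your endgame is wrong: you claim the variational problem is ``verbatim Maynard's,'' so that $M_k>4$, $k=50$ and hence a 246-type constant suffice for every admissible $Q$. The order-$k$ pole of your singular series only governs the shape of the asymptotics for $S_1$ and $S_2$; it does not make the global constants match. The density of lattice points $(m,n)$ with $Q(m,n)$ prime among all points with $Q(m,n)\le X$ is $\asymp \delta_Q\sqrt{\Delta}/(\pi h(-\Delta)\log X)$, not $1/\log X$, and this factor does not cancel between $S_1$ and $S_2$. In the paper this surfaces as the inequality \eqref{rho}, where the admissible $\rho$ is cut down by the factor $\delta_Q\sqrt{-\Delta}/\bigl(2\pi h(-\Delta)\bigr)$, which depends on the class number (equivalently on $L(1,\chi_{-\Delta})$) and is in general smaller than $1$. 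Consequently the required $k$ depends on $Q$: the paper concludes not with Maynard's numerics but with the Polymath bound $\sup_{\mathcal F} kJ_k(\mathcal F)/I(\mathcal F)\gg\log k$ from \cite{Poly}, taking $k$ large in terms of $Q$, which is exactly why Theorem~\ref{Vat} asserts only a constant $c(Q)$ rather than a universal bound. Your claim that ``$k=50$ carries over unchanged, and $c(Q)$ is the diameter of a narrowest $Q$-admissible $50$-tuple'' is therefore unsupported and, for forms with large class number, false as stated; to repair it you would have to carry the density constant through the main-term computation, as the paper does, and then choose $k=k(Q)$ accordingly.
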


In Section 2, we will establish Proposition \ref{BVbqf} using the Bombieri-Vinogradov theorem for number fields developed by Huxley \cite{Hux}. The proof of Theorem \ref{Vat} will be provided in Section 3.\\

In the following sections we will fix a binary quadratic form $Q(x, y)=ax^2+bxy+cy^2$ and denote $-\Delta<0$ to be its discriminant. For simplicity we assume that $\Delta>4$. The proof for $\Delta\le4$ is similar but some changes are needed to account for the larger (but still finite) unit group.

\section{Bombieri-Vinogradov theorem for $Q(x, y)$}

Let $K$ be a number field with ring of integers $\mathcal{O}_K$. Let $r_1$ be the number of real embeddings of $K$ in $\mathbb{C}$, $h_K$ be the class number of $K$ and $U$ be the group of units in $\mathcal{O}_K$. Eventually we will take $K=\mathbb{Q}(-\Delta)$, and since $\Delta>4$ we have $U=\{\pm1\}$.\\

To generalize the notion of arithmetic progression, we introduce the notion of ray class group. For an integral ideal $\mathfrak{q}$ of $\mathcal{O}_K$, we consider the group of fractional ideals coprime to $\mathfrak{q}$ and define an equivalence relation between them: we say that $\mathfrak{a}\sim\mathfrak{b}$ if there exists $\alpha, \beta\in\mathcal{O}_K$ that satisfies:
\begin{enumerate}
\item $(\alpha), (\beta)$ coprime to $\mathfrak{q}$,
\item $\alpha-\beta\in\mathfrak{q}$,
\item $\alpha/\beta\succ0$, that is, $\sigma(\alpha/\beta)>0$ for all real embeddings $\sigma$ of $K$,
\end{enumerate}
such that
\[
(\alpha)\mathfrak{a}=(\beta)\mathfrak{b}.
\]
The equivalence classes form a group, the ray class group $\imod{\mathfrak{q}}$ and we denote it by $C_\mathfrak{q}$. Let $h(\mathfrak{q})$ denote the cardinality of $C_\mathfrak{q}$. Then the value of $h(\mathfrak{q})$ is provided by the formula
\[
h(\mathfrak{q})=\frac{2^{r_1}\phi(\mathfrak{q})h_K}{[U:U_{\mathfrak{q}, 1}]}
\]
where
\[
U_{\mathfrak{q}, 1}=\{\alpha\in U:\alpha\equiv1\imod{\mathfrak{q}}, \alpha\succ0\}
\]
and $\phi(\mathfrak{q})$ is defined by
\[
\phi(\mathfrak{q})=N(\mathfrak{q})\prod_{\mathfrak{p}|\mathfrak{q}}\bigg(1-\frac{1}{N(\mathfrak{p})}\bigg).
\]
For the proof see \cite{Milne}. Furthermore, the proof there also implies that $\phi(\mathfrak{q})$ is the number of elements of $(\mathcal{O}_K/\mathfrak{q})^*$. In this setting, Huxley \cite{Hux} proved the following generalization of Bombieri-Vinogradov theorem for number fields:
\begin{prop}[Huxley]
Let $\chi_{\mathfrak{P}}$ be the characteristic function for prime ideals and $\pi(x, K)$ be the number of prime ideals with norm less than $x$. Then for any $A>0$ and any $\theta<1/2$, we have
\[
\sum_{N(\mathfrak{q})\le x^\theta}\frac{h(\mathfrak{q})}{\phi(\mathfrak{q})}\max_{\mathfrak{C}_\mathfrak{q}\in C_\mathfrak{q}}\max_{y\le x}\bigg|\sum_{\substack{N(\mathfrak{a})\le y\\\mathfrak{a}\in\mathfrak{C}_\mathfrak{q}}}\chi_{\mathfrak{P}}(\mathfrak{a})-\frac{\pi(y, K)}{h(\mathfrak{q})}\bigg|\ll_A x(\log x)^{-A}.
\]
\end{prop}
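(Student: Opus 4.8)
The plan is to prove the number-field statement by transcribing the large-sieve proof of the classical Bombieri--Vinogradov theorem, working throughout with integral ideals and ray class characters in place of integers and Dirichlet characters. First I would pass from the prime-ideal indicator $\chi_{\mathfrak{P}}$ to the ideal von Mangoldt function $\Lambda_K$, defined by $\Lambda_K(\mathfrak{a})=\log N(\mathfrak{p})$ when $\mathfrak{a}=\mathfrak{p}^k$ and $\Lambda_K(\mathfrak{a})=0$ otherwise, and set $\psi(y;\mathfrak{C}_\mathfrak{q})=\sum_{N(\mathfrak{a})\le y,\ \mathfrak{a}\in\mathfrak{C}_\mathfrak{q}}\Lambda_K(\mathfrak{a})$ together with the unrestricted $\psi(y,K)=\sum_{N(\mathfrak{a})\le y}\Lambda_K(\mathfrak{a})$. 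The higher prime powers contribute only $O(\sqrt{x})$ and the logarithmic weight is stripped off by partial summation, so the asserted bound for $\chi_{\mathfrak{P}}$ follows from the analogous bound for $\psi(y;\mathfrak{C}_\mathfrak{q})-\psi(y,K)/h(\mathfrak{q})$.

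Next I would invoke orthogonality of the ray class characters modulo $\mathfrak{q}$: writing the indicator of the class $\mathfrak{C}_\mathfrak{q}$ as $h(\mathfrak{q})^{-1}\sum_{\chi}\overline{\chi(\mathfrak{C}_\mathfrak{q})}\chi(\mathfrak{a})$, the principal character $\chi_0$ reproduces the main term $\psi(y,K)/h(\mathfrak{q})$ exactly, and the remaining difference equals $h(\mathfrak{q})^{-1}\sum_{\chi\ne\chi_0}\overline{\chi(\mathfrak{C}_\mathfrak{q})}\,\psi(y,\chi)$ with $\psi(y,\chi)=\sum_{N(\mathfrak{a})\le y}\Lambda_K(\mathfrak{a})\chi(\mathfrak{a})$. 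Taking absolute values and maximising over the classes bounds the whole expression by a weighted average of $\max_{y\le x}|\psi(y,\chi)|$ over non-principal $\chi$; here the external weight $h(\mathfrak{q})/\phi(\mathfrak{q})$ combines with the factor $h(\mathfrak{q})^{-1}$ from orthogonality to leave exactly the weight $\phi(\mathfrak{q})^{-1}$, just as in the rational theorem. Inducing each character down to the primitive ray class character that induces it then reorganises the sum over all $\mathfrak{q}$ with $N(\mathfrak{q})\le x^\theta$ into a sum over primitive characters of bounded conductor.

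The two analytic engines are (a) a large sieve inequality for ray class characters, of the shape $\sum_{N(\mathfrak{q})\le Q}\frac{N(\mathfrak{q})}{\phi(\mathfrak{q})}\sum_{\chi\bmod\mathfrak{q}}^{*}|\sum_{N(\mathfrak{a})\le N}a_\mathfrak{a}\chi(\mathfrak{a})|^2\ll(N+Q^2)\sum_\mathfrak{a}|a_\mathfrak{a}|^2$, established through the geometry of the Minkowski embedding of $\mathcal{O}_K$ and a duality argument; and (b) a Siegel--Walfisz theorem for $K$, supplied by the classical zero-free region for Hecke $L$-functions together with Siegel's bound on the possible exceptional real zero. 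To bring these to bear I would split $\Lambda_K$ by a Vaughan-type identity into ``Type I'' sums, in which one variable ranges over a smooth interval and which yield to a P\'olya--Vinogradov estimate or directly to the large sieve, and ``Type II'' bilinear sums, which are handled by Cauchy--Schwarz followed by (a). Assembling the contributions over $N(\mathfrak{q})\le x^\theta$ and using (b) for the moduli of small conductor produces the bound $\ll_A x(\log x)^{-A}$ for every $A>0$ and every fixed $\theta<1/2$.

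I expect the large sieve inequality (a) to be the main obstacle: it must be proved with full uniformity in $\mathfrak{q}$, which requires counting ideals of given norm by lattice-point estimates in the embedding of $\mathcal{O}_K$ and, in the general number field, a careful treatment of the conditions at the infinite places (the positivity $\succ0$ responsible for the factor $2^{r_1}$ in $h(\mathfrak{q})$). A secondary difficulty is the exceptional Landau--Siegel zero of a real ray class character; exactly as over $\IQ$ this can only be controlled ineffectively through Siegel's theorem, so the constant implied by $\ll_A$ is ineffective, but since any fixed power of $\log x$ may be conceded this is harmless. With (a) and (b) secured, the surviving steps are the routine bookkeeping of the Vaughan decomposition and the final partial summation returning from $\psi$ to $\pi(\cdot,K)$.
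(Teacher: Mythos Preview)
The paper does not give its own proof of this proposition: it is quoted verbatim as Huxley's theorem, with a citation to \cite{Hux}, and then used as a black box in the proof of Proposition~\ref{BVbqf}. So there is nothing in the paper to compare your argument against.

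That said, your outline is essentially the strategy Huxley himself follows: expand the class indicator by orthogonality over ray class characters, reduce to primitive characters, and then combine a large sieve inequality for Hecke characters with a Siegel--Walfisz input and a combinatorial decomposition of $\Lambda_K$. Two small caveats. First, the principal character does not reproduce $\psi(y,K)/h(\mathfrak{q})$ \emph{exactly}, since $\chi_0$ kills the ideals not coprime to $\mathfrak{q}$; the discrepancy is $O(\log N(\mathfrak{q}))$ and is absorbed in the error, but you should say so rather than claim exact cancellation. Second, Huxley's 1971 paper predates Vaughan's identity; he works instead through zero-density estimates for Hecke $L$-functions obtained from the large sieve, which is an equivalent but slightly different packaging of the same analytic information. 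Either route is fine, but if you are going to attribute the result to Huxley you may want to note that your Vaughan/Type~I--Type~II presentation is a later streamlining rather than his original argument.
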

Since we are only concerned with imaginary quadratic fields, we have $r_1=0$ and $\alpha\succ0$ for all $\alpha\in\mathcal{O}_K$. Since $\Delta\neq3, 4$, we also know that $[U:U_{\mathfrak{q}, 1}]=2$ unless $2\in\mathfrak{q}$. This only happens when $N(\mathfrak{q})=2, 4$, and is therefore negligible.\\

In Proposition \ref{BVbqf}, note that if $Q(x, y)$ represents an integer $k$, then we can apply a $\text{SL}_2(\mathbb{Z})$ action on $Q(x, y)$ so that $Q(1, 0)=k$. Since $Q(x, y)$ is known to represent infinitely many primes, we assume $Q(1, 0)$ is a sufficiently large prime that is coprime to all $q$ in Proposition \ref{BVbqf}. We also assume $(Q(m, n), q)=1$ since we only count prime values of $Q(m, n)$. The main idea of the proof of Proposition \ref{BVbqf} is to relate $Q(m, n)$ to certain integral ideals. This can be achieved by using
\[
aQ(m, n)=N((\alpha_1m+\alpha_2n))
\]
where
\[
\alpha_1=a, ~\alpha_2=\frac{-b+\sqrt{b^2-4ac}}{2}.
\]
From our assumptions, the ideal $(\alpha_1m+\alpha_2n)$ is coprime to the ideal $(q)$. Define the ideal $\mathfrak{a}=(\alpha_1, \alpha_2)$. Then we have $N\mathfrak{a}=a$ and $(\alpha_1m+\alpha_2n)\subseteq\mathfrak{a}$. Thus we can write $Q(m, n)=N(\mathfrak{s})$ where
\[
\mathfrak{as}=(\alpha_1m+\alpha_2n).
\]
On the other hand, if $\mathfrak{s}$ is an integral ideal of $\mathcal{O}_K$ such that $\mathfrak{as}=(\alpha)$ is principal, we must have $\alpha\in(\alpha_1, \alpha_2)$ and hence $\mathfrak{as}=(\alpha_1m+\alpha_2n)$ for some $m, n\in\mathbb{Z}$. Therefore we can work on the ideals $(\alpha_1m+\alpha_2n)/\mathfrak{a}$ with $m\equiv u, n\equiv v\imod q$ instead. To detect this congruence condition, we resort to ray class group as we promised.
\begin{prop}
Let $Q(x, y)=ax^2+bxy+cy^2$ be a primitive positive definite binary quadratic form with discriminant $-\Delta$ such that $8\nmid\Delta$ and $\Delta$ is not divisible by any odd prime squares. Let $q\in\N$ with $(a, q)=1$ and $u, v\in\mathbb{Z}$ with $(Q(u, v), q)=1$. Then for any $m, n\in\mathbb{Z}$, $m\equiv\pm u, n\equiv \pm v\imod q$ if and only if $(\alpha_1m+\alpha_2n)\sim(\alpha_1u+\alpha_2v)$ in $C_{(q)}$.
\end{prop}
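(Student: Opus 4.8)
The plan is to translate the congruence conditions on $m,n$ directly into the defining relations of the ray class group $C_{(q)}$. Write $\mathfrak{q}=(q)$ and recall that two ideals coprime to $\mathfrak{q}$ are equivalent in $C_\mathfrak{q}$ exactly when they differ by a principal ideal $(\alpha)(\beta)^{-1}$ with $\alpha\equiv\beta\imod{\mathfrak{q}}$ (the positivity condition is vacuous here since $K$ is imaginary quadratic). Since $(\alpha_1 m+\alpha_2 n)$ and $(\alpha_1 u+\alpha_2 v)$ are already principal and (by our standing assumption $(Q(m,n),q)=(Q(u,v),q)=1$) coprime to $\mathfrak{q}$, the equivalence $(\alpha_1 m+\alpha_2 n)\sim(\alpha_1 u+\alpha_2 v)$ in $C_\mathfrak{q}$ holds if and only if there is a unit $\varepsilon\in U=\{\pm1\}$ with $\alpha_1 m+\alpha_2 n\equiv \varepsilon(\alpha_1 u+\alpha_2 v)\imod{q\mathcal{O}_K}$. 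The whole statement thus reduces to showing that this last congruence in $\mathcal{O}_K$ is equivalent to the pair of rational congruences $m\equiv\varepsilon u$, $n\equiv\varepsilon v\imod q$.

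The ``if'' direction is immediate: if $m\equiv\pm u$ and $n\equiv\pm v\imod q$ with matching signs, then $\alpha_1 m+\alpha_2 n-(\pm(\alpha_1 u+\alpha_2 v))$ lies in $q\mathcal{O}_K$. For the converse I would use that $\{1,\alpha_2\}$ (equivalently $\{\alpha_1,\alpha_2\}$ after dividing by the rational integer $a$) is close to being a $\mathbb{Z}$-basis of $\mathcal{O}_K$, and exploit the hypotheses on $\Delta$. Concretely, $\mathcal{O}_K=\mathbb{Z}[\omega]$ where $\omega=\sqrt{-\Delta}$ if $-\Delta\equiv 2,3\imod 4$ and $\omega=(1+\sqrt{-\Delta})/2$ if $-\Delta\equiv 1\imod 4$; since $\alpha_2=(-b+\sqrt{-\Delta})/2$ and $b\equiv\Delta\imod 2$, one checks $\alpha_2=(\text{integer})+\sqrt{-\Delta}/1$ or $(\text{integer})+\omega$ according to parity, so $\{1,\alpha_2\}$ differs from $\{1,\omega\}$ by a unipotent integer matrix and hence is itself a $\mathbb{Z}$-basis of $\mathcal{O}_K$. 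Writing $\alpha_1 m+\alpha_2 n\mp(\alpha_1 u+\alpha_2 v)=a(m\mp u)+\alpha_2(n\mp v)$ and comparing coordinates in this basis, membership in $q\mathcal{O}_K$ forces $q\mid a(m\mp u)$ and $q\mid (n\mp v)$; since $(a,q)=1$ this gives $q\mid (m\mp u)$ as well.

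The remaining point — and the place where the arithmetic hypotheses on $\Delta$ actually get used — is ruling out a sign mismatch: a priori one might fear that $(\alpha_1m+\alpha_2n)\sim(\alpha_1u+\alpha_2v)$ could force only $\alpha_1m+\alpha_2n\equiv\pm(\alpha_1u+\alpha_2v)$ in $\mathcal{O}_K/q$ without the signs on the two coordinates being forced to agree; but the basis computation above shows the sign is global (it is the single unit $\varepsilon$), so $m\equiv\varepsilon u$ and $n\equiv\varepsilon v$ with the \emph{same} $\varepsilon$, which is exactly the asserted conclusion $m\equiv\pm u$, $n\equiv\pm v\imod q$. The hypotheses $8\nmid\Delta$ and $\Delta$ squarefree away from $2$ are what guarantee $\mathcal{O}_K=\mathbb{Z}[\omega]$ with the stated shape (so that $\alpha_2$ together with $1$ really is a basis, with no hidden index divisible by a prime dividing $q$); this is the technical heart, and I would isolate it as the main obstacle: carefully tracking the $2$-adic behaviour of $b,\Delta$ and $\alpha_2$ so that the change of basis from $\{1,\omega\}$ to $\{1,\alpha_2\}$ is by a matrix in $\mathrm{SL}_2(\mathbb{Z})$ and the norm identity $N(\alpha_1 m+\alpha_2 n)=aQ(m,n)$ is consistent with $N\mathfrak{a}=a$. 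Everything else is bookkeeping with the definition of $C_\mathfrak{q}$.
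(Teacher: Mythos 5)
Your overall strategy is the same as the paper's: you translate equivalence of the two principal ideals in $C_{(q)}$ into the congruence $\alpha_1m+\alpha_2n\equiv\varepsilon(\alpha_1u+\alpha_2v)\pmod{q\mathcal{O}_K}$ for some unit $\varepsilon\in\{\pm1\}$ (the paper performs exactly this cancellation of the auxiliary $\beta$, which is invertible mod $(q)$), and then extract the two rational congruences by comparing coordinates of $a(m\mp u)+\alpha_2(n\mp v)$ with respect to an integral basis; the paper does the coordinate comparison via real and imaginary parts with a case split on the parity of $b$. Your reduction step and the ``if'' direction are fine.

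The gap sits exactly where you yourself defer the work as the ``main obstacle'': the claim that $\{1,\alpha_2\}$ is a $\mathbb{Z}$-basis of $\mathcal{O}_K$. This is true when $b$ is odd (then $\Delta$ is odd and squarefree, $-\Delta\equiv1\pmod 4$, and $\alpha_2=\tfrac{-b-1}{2}+\tfrac{1+\sqrt{-\Delta}}{2}$), and when $b$ is even with $\Delta/4\equiv1\pmod 4$, but it is false in a case the hypotheses allow, namely $4\mid\Delta$, $8\nmid\Delta$, $\Delta/4\equiv3\pmod 4$. Concretely, for $Q=x^2+7y^2$ one has $\Delta=28$, $K=\mathbb{Q}(\sqrt{-7})$, $\alpha_2=\sqrt{-7}$, yet $\mathcal{O}_K=\mathbb{Z}[\omega]$ with $\omega=(1+\sqrt{-7})/2$, so $\mathbb{Z}+\mathbb{Z}\alpha_2$ has index $2$ in $\mathcal{O}_K$; your parity bookkeeping slips here, since for even $b$ one has $\alpha_2=-b/2+\sqrt{-\Delta/4}$ (not ``integer plus $\sqrt{-\Delta}$''), and whether this yields a basis depends on $\Delta/4 \bmod 4$, which you never check. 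In that index-$2$ situation, membership of $a(m\mp u)+\alpha_2(n\mp v)$ in $q\mathcal{O}_K$ only gives $q\mid 2(n\mp v)$ plus a linked condition on the rational coordinate, and for even $q$ the desired conclusion genuinely fails: with $q=2$, $1-\sqrt{-7}=-2(\omega-1)\in 2\mathcal{O}_K$, so $(1)\sim(\sqrt{-7})$ in $C_{(2)}$ although $(m,n)=(0,1)\not\equiv\pm(1,0)\pmod 2$. So your argument is complete only when $-\Delta$ is a fundamental discriminant, or when $q$ is odd so the index $2$ is invertible modulo $q$ (the paper's ``imaginary part'' step tacitly needs the same restriction); as written, your proposal neither proves the basis claim nor isolates and handles the case where it breaks down, and that is the piece that must be supplied or the hypotheses adjusted.
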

\begin{proof}
WLOG suppose $(m, n)\equiv(u, v)\imod q$. Then
\[
(\alpha_1m+\alpha_2n)-(\alpha_1u+\alpha_2v)=mq
\]
for some $m\in\mathcal{O}_K$. Therefore
\[
(\alpha_1m+\alpha_2n-mq)(\alpha_1m+\alpha_2n)=(\alpha_1m+\alpha_2n)(\alpha_1u+\alpha_2v).
\]
By taking $\alpha=\alpha_1m+\alpha_2n-mq$ and $\beta=\alpha_1m+\alpha_2n$, we obtain $(\alpha)(\alpha_1m+\alpha_2n)=(\beta)(\alpha_1u+\alpha_2v)$ and hence $(\alpha_1m+\alpha_2n)\sim(\alpha_1u+\alpha_2v)$ in $C_{(q)}$.\\

On the other hand, if $(\alpha_1m+\alpha_2n)\sim(\alpha_1u+\alpha_2v)$, then there exists $\alpha, \beta$ coprime to $(q)$ such that $\alpha-\beta\in(q)$ and
\[
\alpha(\alpha_1m+\alpha_2n)=\pm\beta(\alpha_1u+\alpha_2v).
\]
Since $\alpha-\beta=tq$ for some $t\in\mathcal{O}_K$, this gives
\[
tq(\alpha_1m+\alpha_2n)=-\beta\bigg(\alpha_1(m\pm u)+\alpha_2(n\pm v)\bigg).
\]
As $\beta$ is coprime to $(q)$ we deduce that
\[
(q)\bigg|\bigg(\alpha_1(m\pm u)+\alpha_2(n\pm v)\bigg).
\]
This implies $(\alpha_1(m\pm u)+\alpha_2(n\pm v))/q\in\mathcal{O}_K$, or alternatively
\[
\frac{2a(m\pm u)-b(n\pm v)}{2q}+\frac{n\pm v}{2q}\sqrt{-\Delta}\in\mathcal{O}_K.
\]
If $b$ is even, then $\Delta$ is also even and $4||\Delta$. From the imaginary part we have $q|n\pm v$. From the real part it forces $2q|2a(m\pm u)$, and hence $q|m\pm u$ as well. If $b$ is odd, then $\Delta$ is also odd and the imaginary part gives $q|n\pm v$ again. Note that
\[
\frac{-b(n\pm v)}{2q}, \frac{n\pm v}{2q}
\]
are both integers or half-integers. Therefore $2a(m\pm u)/2q\in\mathbb{Z}$ and the result follows again.
\end{proof}

The above proposition shows that the classes in the ray class group indeed generalize the notion of arithmetic progressions, but up to a unit. Moreover, the function $\nu(q)$ is related to the Euler-phi function for integral ideals:
\begin{prop}\label{nu}
For any $q\in\mathbb{N}$, we have
\[
\nu(q)=\phi((q)).
\]
In particular, if $p$ is a prime, then
\[
\nu(p)=\begin{cases}
(p-1)^2\hspace{5mm}\text{ if $p$ splits,}\\
p^2-1\hspace{5mm}\text{ if $p$ is inert,}\\
p^2-p\hspace{5mm}\text{ if $p$ ramifies.}
\end{cases}
\]
\end{prop}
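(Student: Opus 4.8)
The plan is to produce an explicit bijection converting the set counted by $\nu(q)$ into the unit group of $\mathcal{O}_K/(q)$, whose cardinality is $\phi((q))$ by the fact recalled from \cite{Milne}. Both $\nu(q)$ and $\phi((q))$ depend only on $\Delta$ — the latter trivially, the former because replacing $Q$ by an $\mathrm{SL}_2(\mathbb{Z})$-equivalent form permutes $(\mathbb{Z}/q\mathbb{Z})^2$ and preserves coprimality of the represented values — so we may assume $(a,q)=1$, as is in any case true in the running setup of Section 2.

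The main step is to show that $\psi\colon(\mathbb{Z}/q\mathbb{Z})^2\to\mathcal{O}_K/q\mathcal{O}_K$, $\psi(u,v)=\alpha_1 u+\alpha_2 v\bmod q\mathcal{O}_K$, is a bijection. It is a $\mathbb{Z}/q\mathbb{Z}$-module homomorphism between sets of equal cardinality $q^2$ (the target because $N(q\mathcal{O}_K)=q^2$), so injectivity suffices. Here the hypotheses on $\Delta$ are needed, to ensure that $\{1,\alpha_2\}$ is an integral basis of $\mathcal{O}_K$; equivalently, that $\mathbb{Z}[\alpha_2]=\mathbb{Z}+\mathbb{Z}\alpha_2$ — which has discriminant $-\Delta$ since $\alpha_2$ is a root of the monic polynomial $X^2+bX+ac$ — is the maximal order. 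Granting this, if $\alpha_1 u+\alpha_2 v\in q\mathcal{O}_K=q\mathbb{Z}+q\alpha_2\mathbb{Z}$, then comparing coefficients in the basis $\{1,\alpha_2\}$ gives $q\mid au$ and $q\mid v$, hence $q\mid u$ because $(a,q)=1$; thus $\psi$ is injective, and so bijective.

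It remains to identify, via $\psi$, the condition $(Q(u,v),q)=1$ with membership in $(\mathcal{O}_K/q\mathcal{O}_K)^{\ast}$. Put $\beta=\alpha_1 u+\alpha_2 v$. From the norm identity $aQ(u,v)=N(\alpha_1 u+\alpha_2 v)$ of Section 2 we get $N(\beta)=aQ(u,v)$, so $(N(\beta),q)=(Q(u,v),q)$ since $(a,q)=1$. On the other hand, $\beta\bmod q$ is a unit in $\mathcal{O}_K/q\mathcal{O}_K$ iff $(\beta)+(q)=\mathcal{O}_K$, iff no prime ideal lying over a rational prime $p\mid q$ divides $(\beta)$, iff $p\nmid N(\beta)$ for every $p\mid q$, iff $(N(\beta),q)=1$. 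Combining the two chains, $\psi$ restricts to a bijection from $\{(u,v):(Q(u,v),q)=1\}$ onto $(\mathcal{O}_K/q\mathcal{O}_K)^{\ast}$, whence $\nu(q)=\lvert(\mathcal{O}_K/(q))^{\ast}\rvert=\phi((q))$. The prime values then follow from $\phi(\mathfrak{q})=N(\mathfrak{q})\prod_{\mathfrak{p}\mid\mathfrak{q}}(1-N(\mathfrak{p})^{-1})$: if $p$ splits, $(p)=\mathfrak{p}_1\mathfrak{p}_2$ with $N(\mathfrak{p}_i)=p$ gives $p^2(1-1/p)^2=(p-1)^2$; if $p$ is inert, $(p)$ is prime of norm $p^2$ and gives $p^2(1-1/p^2)=p^2-1$; if $p$ ramifies, $(p)=\mathfrak{p}^2$ with $N(\mathfrak{p})=p$ gives $p^2(1-1/p)=p^2-p$.

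The delicate point is the injectivity of $\psi$, i.e.\ confirming from $8\nmid\Delta$ and the squarefreeness of the odd part of $\Delta$ that $\mathbb{Z}[\alpha_2]=\mathcal{O}_K$. This is transparent when $\Delta$ is odd (then $-\Delta\equiv1\imod4$ and is squarefree, so $-\Delta$ is a fundamental discriminant), but when $4\parallel\Delta$ the index of $\mathbb{Z}[\alpha_2]$ in $\mathcal{O}_K$ is a power of $2$, and one should check the claim directly at the prime $2$, which is precisely where the two hypotheses on $\Delta$ are invoked. The remaining manipulations are formal ideal arithmetic and require no further input.
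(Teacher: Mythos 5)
Your route is the same as the paper's --- the paper's proof consists precisely of your last two steps (``$(Q(u,v),q)=1$ iff $\alpha_1u+\alpha_2v$ is a unit mod $(q)$'', then count $(\mathcal{O}_K/(q))^{\ast}$ by $\phi((q))$), with the bijectivity of $(u,v)\mapsto\alpha_1u+\alpha_2v$ left entirely implicit --- and most of your write-up is sound: the $\mathrm{SL}_2(\mathbb{Z})$ reduction to $(a,q)=1$, the equivalence of $(N(\beta),q)=1$ with $\beta$ being a unit in $\mathcal{O}_K/q\mathcal{O}_K$, and the split/inert/ramified evaluation of $\phi((p))$ are all correct. The genuine gap is exactly the step you defer in your final paragraph: your injectivity argument needs $\mathbb{Z}[\alpha_2]=\mathcal{O}_K$, you verify it only for odd $\Delta$, and you assert that when $4\parallel\Delta$ the hypotheses $8\nmid\Delta$ and squarefree odd part let one ``check the claim directly at the prime $2$.'' That check cannot succeed: writing $\Delta=4m$ with $m$ odd and squarefree (which is all the hypotheses give), one has $\mathbb{Z}[\alpha_2]=\mathbb{Z}[\sqrt{-m}]$, which is the maximal order only when $m\equiv1\imod4$; when $m\equiv3\imod4$ (i.e. $\Delta\equiv12\imod{16}$) the fundamental discriminant is $-m$, the index $[\mathcal{O}_K:\mathbb{Z}[\alpha_2]]$ equals $2$, and your map $\psi$ is no longer a bijection modulo even $q$.

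Moreover this is not a fixable presentation issue, because the stated identity itself fails there: take $Q(x,y)=x^2+7y^2$, so $\Delta=28$, which satisfies both hypotheses. Then $\nu(2)=2$ (the pairs $(1,0)$ and $(0,1)$), while $2$ splits in $K=\mathbb{Q}(\sqrt{-7})$ since $-7\equiv1\imod8$, so $\phi((2))=(2-1)^2=1$. Hence no argument closes your deferred step for even $q$; one must either strengthen the hypothesis to ``$-\Delta$ is a fundamental discriminant'' (i.e. $\Delta\equiv3\imod4$ squarefree, or $\Delta\equiv4\imod{16}$ with $\Delta/4$ squarefree) or restrict the claim to odd $q$, where your argument does go through because the index divides $2$ and $q\mathcal{O}_K\cap\mathbb{Z}[\alpha_2]=q\mathbb{Z}[\alpha_2]$ for odd $q$. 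To be fair, the paper's own two-line proof silently assumes the very same bijection when it passes from counting pairs $(u,v)$ to counting units, so you have isolated a real weak point of the proposition rather than overlooked something the paper actually supplies.
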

\begin{proof}
Since $(Q(1, 0), q)=1$, we have $(Q(m, n), q)=1$ if and only if $\alpha_1m+\alpha_2n$ is a unit in $(\mathcal{O}_K/\mathfrak{q})^*$ with $\mathfrak{q}=(q)$. The result then follows from the fact that $\phi(\mathfrak{q})$ is the number of elements in $(\mathcal{O}_K/\mathfrak{q})^*$.
\end{proof}
\begin{proof}[Proof of Proposition \ref{BVbqf}]
\[
\begin{split}
\sum_{\substack{Q(m, n)\le y\\u\equiv u\imod q\\v\equiv v\imod q}}\chip(Q(m, n))&=\sum_{\substack{Q(m, n)\le y\\(\alpha_1m+\alpha_2n)\sim(\alpha_1u+\alpha_2v)}}\chip\bigg(\frac{N(\alpha_1m+\alpha_2n)}{a}\bigg)=2\sum_{\substack{N\mathfrak{s}\le y\\\mathfrak{s}\in\mathfrak{C}_{(q)}}}\chi_\mathfrak{P}(\mathfrak{s})
\end{split}
\]
where $\mathfrak{C}_{(q)}=\langle(\alpha_1u+\alpha_2v)\rangle\langle\mathfrak{a}\rangle^{-1}$ and $\langle\cdot\rangle$ is the equivalence class that the ideal belongs to. On the other hand
\[
\begin{split}
\frac{\pi(y, K)}{h((q))}=\frac{2\pi(y, K)}{\nu(q)h_K}.
\end{split}
\]
Therefore
\[
\begin{split}
\sum_{\substack{N(\mathfrak{a})\le y\\\mathfrak{a}\in\mathfrak{C}_{(q)}}}\chip(\mathfrak{a})-\frac{\pi(y, K)}{h((q))}=&~\frac{1}{2}\bigg(\sum_{\substack{Q(m, n)\le y\\u\equiv u\imod q\\v\equiv v\imod q}}\chip(Q(m, n))-\frac{\pi(x, Q)}{\nu(q)}\bigg)\\
&+\frac{1}{\nu(q)}\bigg(\pi(x, Q)-\frac{\pi(y, K)}{h_K}\bigg).
\end{split}
\]
By Proposition \ref{nu} and the fact that $1/\nu(p)=1/p^2+O(1/p^3)$, we deduce that
\[
\begin{split}
&\sum_{d<x^\theta}\max_{\substack{u, v\imod d\\(Q(u, v)=1)}}\max_{y\le x}\bigg|\sum_{\substack{Q(m, n)\le y\\m\equiv u\imod d\\n\equiv v\imod d}}\chip(Q(m, n))-\frac{\pi(y, Q)}{\nu(d)}\bigg|\\
\ll&\sum_{N(\mathfrak{q})\le x^\theta}\max_{\mathfrak{C}_\mathfrak{q}\in C_\mathfrak{q}}\max_{y\le x}\bigg|\sum_{\substack{N(\mathfrak{a})\le y\\\mathfrak{a}\in\mathfrak{C}_\mathfrak{q}}}\chip(\mathfrak{a})-\frac{\pi(y, K)}{h(\mathfrak{q})}\bigg|
+\bigg|\pi(x, Q)-\frac{\pi(y, K)}{h_K}\bigg|\bigg(\sum_{q\le x^\theta}\frac{1}{\nu(q)}\bigg)\\
\ll&~x(\log x)^{-A}.
\end{split}
\]
This completes the proof of Proposition 1.
\end{proof}

\section{Bounded gap between primes represented by $Q(x, y)$}
For $k$ pairs of integers $(g_1, h_1), (g_2, h_2), ..., (g_k, h_k)$, we call them \textbf{admissible} with respect to $Q(x, y)$ if for any prime $p$, there exists $m, n\in\mathbb{Z}$ such that
\[
\prod_{\ell=1}^kQ(m+g_\ell, n+h_\ell)\not\equiv0\imod p.
\]
For bounded gap problem, it is now standard to consider
\begin{equation}
\label{now}
\sum_{Q(m, n)\sim X}\bigg(\sum_{j=1}^k\chip(Q(m+g_j, n+h_j))-\rho\bigg)\bigg(\sum_{\substack{d_1, d_2, ..., d_k\in\N\\d_i|Q(m+g_i, n+h_i)\i}}\lambda(d_1, d_2, ..., d_k)\bigg)^2
\end{equation}
for some $\rho>0$ and suitable sieve weights $\lambda$. To construct the sieve weights, let $\mathcal{F}:[0, \infty)^k\rightarrow[0, 1]$ be a symmetric smooth function that is supported on
\[
\Delta_k:=\{(t_1, ..., t_k)\in[0, \infty)^k:t_1+...+t_k\le1\}.
\]
Then we can define our sieve weights $\lambda(d_1, ..., d_k)$ by
\[
\lambda(d_1, ..., d_k)=\mu(d_1)\cdots\mu(d_k)\mathcal{F}\bigg(\frac{\log d_1}{\log R}, ..., \frac{\log d_k}{\log R}\bigg).
\]
The value of $R$ will be chosen to be a power of $X$. From the support of $\mathcal{F}$ we can see that $d_1d_2\cdots d_k\le R$. Our goal here is to show that there exist $\rho>1$ and $k\ge2$ such that \eqref{now} is positive for sufficiently large $X$. However, some difficulties might arise if $Q(m+g_j, n+h_j)$ are not coprime to each other. Fortunately these potential common factors must be relatively small:
\begin{lem}
Let $Q(x, y)=ax^2+bxy+cy^2$. If $Q(m, n)\equiv Q(m+g, n+h)\equiv0\imod p$, then $p|aQ(g, h)Q(g, -h)$.
\end{lem}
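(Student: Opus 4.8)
The statement is an elimination-theory fact about the binary quadratic form $Q$, so I would attack it by linear algebra over $\Z/p\Z$. Suppose $p \mid Q(m,n)$ and $p \mid Q(m+g, n+h)$. Subtracting, $p$ divides $Q(m+g,n+h) - Q(m,n) = 2amg + bmh + bng + 2cnh + Q(g,h)$, which is an affine-linear expression in $(m,n)$. Together with $Q(m,n) \equiv 0$ this gives a system coupling $(m,n)$ to $(g,h)$; the idea is to use both congruences to pin down $(m,n)$ modulo $p$ in terms of $(g,h)$, substitute back, and read off a polynomial condition on $(g,h)$ alone that $p$ must divide.

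**Key steps.** First I would homogenize: associate to $Q$ the symmetric matrix $M = \begin{pmatrix} 2a & b \\ b & 2c\end{pmatrix}$, so that $Q(x,y) = \tfrac12 (x\ y) M (x\ y)^{T}$ and $\det M = 4ac - b^2 = \Delta$ (up to sign/the $4a$ normalization already fixed in the paper). Writing $\mathbf{v} = (m,n)^T$ and $\mathbf{w} = (g,h)^T$, the two hypotheses become $\mathbf{v}^T M \mathbf{v} \equiv 0$ and $(\mathbf{v}+\mathbf{w})^T M (\mathbf{v}+\mathbf{w}) \equiv 0 \pmod p$, hence by subtraction $2\mathbf{v}^T M \mathbf{w} + \mathbf{w}^T M \mathbf{w} \equiv 0$. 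Second, I would treat $\mathbf{w}^T M \mathbf{v}$ as a linear functional in $\mathbf{v}$; combined with the quadratic constraint, eliminating $\mathbf{v}$ is exactly a resultant computation. Concretely, solve the linear relation for one coordinate of $\mathbf{v}$ (this is where one must be careful about which coefficient is invertible mod $p$), substitute into $\mathbf{v}^T M \mathbf{v} \equiv 0$, clear denominators, and the resulting expression will factor — over $\IC$ the variety $\{Q=0\}$ is two lines, and the linear form $\mathbf{w}^T M \mathbf{v}$ meets them in points whose ``$\mathbf{w}$-content'' is governed by $Q(g,h)$ and its twist $Q(g,-h)$, explaining the product $Q(g,h)Q(g,-h)$. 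The factor of $a$ appears from clearing the leading denominator.

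**Alternative route (probably cleaner).** Rather than casework on invertibility, I would use the ideal-theoretic setup already developed in Section 2. If $p \mid Q(m,n)$ then $p \mid N(\alpha_1 m + \alpha_2 n)/a$, so some prime ideal $\mathfrak p \mid (p)$ divides $(\alpha_1 m + \alpha_2 n)$; likewise $\mathfrak p' \mid (\alpha_1(m+g) + \alpha_2(n+h))$ for some $\mathfrak p' \mid (p)$. Since $(p)$ has at most two prime factors, $\mathfrak p$ and $\mathfrak p'$ are either equal or conjugate. If $\mathfrak p = \mathfrak p'$, then $\mathfrak p$ divides the difference $(\alpha_1 g + \alpha_2 h)$, so $\mathfrak p \mid (\alpha_1 g + \alpha_2 h)$ and taking norms $p \mid N(\alpha_1 g + \alpha_2 h) = a Q(g,h)$. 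If $\mathfrak p' = \overline{\mathfrak p}$, then $\overline{\mathfrak p} \mid (\alpha_1(m+g) + \alpha_2(n+h))$ while $\mathfrak p \mid (\alpha_1 m + \alpha_2 n)$; applying complex conjugation to the first, $\mathfrak p \mid (\overline{\alpha_1} (m+g) + \overline{\alpha_2}(n+h))$, and since $\overline{\alpha_1} = \alpha_1 = a$ and $\overline{\alpha_2} = -b - \alpha_2$, subtracting $a$ times a suitable combination from $\mathfrak p \mid (\alpha_1 m + \alpha_2 n)$ eliminates $(m,n)$ up to the factor $a$ and leaves $\mathfrak p$ dividing an ideal of norm dividing $a Q(g,-h)$ (the sign flip $h \mapsto -h$ being precisely the effect of conjugating $\alpha_2$). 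Combining the two cases, $p \mid a Q(g,h) Q(g,-h)$.

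**Main obstacle.** The delicate point is the ramified/small-prime bookkeeping: when $p \mid \Delta$ or $p = 2$ the ideal $(p)$ is a prime square, the quotient $\mathcal{O}_K/\mathfrak p$ has characteristic $p$ nilpotents, and ``$\mathfrak p \mid$'' no longer cleanly tracks $p$-divisibility of norms. One must check that in these cases the extra factor is still absorbed by the $a\,Q(g,h)\,Q(g,-h)$ on the right (it is, since divisibility by $\mathfrak p$ forces divisibility by $p$ of the norm up to the bounded contribution of $a$ and the ramification index $2$, and $p \mid \Delta \mid $ (something dividing) $Q(g,h)Q(g,-h)$ can be verified directly from $Q(g,h)Q(g,-h) \equiv (2ag^2 + bgh)^2/(4a) \cdot(\ldots)$ modulo $\Delta$ — a short explicit check). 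I would handle the generic split/inert primes by the ideal argument above and dispose of the finitely many exceptional primes dividing $2a\Delta$ by the brute-force linear-algebra computation, noting they are all automatically accounted for by the stated right-hand side.
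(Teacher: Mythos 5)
Your proposal fails at exactly the case you describe most optimistically: the ``conjugate'' case. Suppose $p$ splits, $p\nmid a\Delta$, and $\mathfrak{p}\mid(\alpha_1 m+\alpha_2 n)$ while $\overline{\mathfrak{p}}\mid(\alpha_1(m+g)+\alpha_2(n+h))$. Conjugating the second divisibility gives $\mathfrak{p}\mid\bigl(a(m+g)+\overline{\alpha_2}(n+h)\bigr)$, so modulo $\mathfrak{p}$ you have the linear system $am+\alpha_2 n\equiv 0$, $am+\overline{\alpha_2}n\equiv-(ag+\overline{\alpha_2}h)$ in the unknowns $(m,n)$. Its coefficient matrix has determinant $a(\overline{\alpha_2}-\alpha_2)=-a\sqrt{-\Delta}$, a unit modulo $\mathfrak{p}$, so for \emph{every} pair $(g,h)$ the system is solvable in $(m,n)$; eliminating $(m,n)$ leaves no condition on $(g,h)$ at all, and the promised ``ideal of norm dividing $aQ(g,-h)$'' never appears. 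The same defect is hidden in your resultant sketch: from one quadratic and one linear equation in the two unknowns $(m,n)$, the eliminant with respect to one coordinate still contains the other coordinate, so it is not a polynomial in $(g,h)$ alone. Your ``main obstacle'' paragraph worries about the ramified and small primes, but those are the harmless cases (a single prime above $p$, i.e.\ your first case); the split case you treat as routine is the one that cannot be closed.

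In fact no argument can close it, because the lemma as stated is false for split primes. Take $Q(x,y)=x^2+xy+2y^2$ (so $\Delta=7$), $p=11$, $(m,n)=(6,1)$, $(g,h)=(2,1)$: then $Q(6,1)=44$ and $Q(8,2)=88$ are both divisible by $11$, while $aQ(g,h)Q(g,-h)=Q(2,1)Q(2,-1)=8\cdot 4=32$ is not. The paper's own proof stumbles at the same spot: the assertion $mn^{-1}\equiv(m+g)(n+h)^{-1}\pmod p$ tacitly assumes that $m/n$ and $(m+g)/(n+h)$ are the \emph{same} root of $aX^2+bX+c$ modulo $p$, whereas they may be the two distinct roots (as in the example above), and the subsequent ``$mh\equiv\pm ng$'' does not follow in that case. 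What is true, and what both your case analysis and the paper's actually establish, is the lemma for inert and ramified $p$ (and for split $p$ only when the two zeros lie on the same line mod $p$); for split $p$ one can only say that the solutions $(m,n)$ modulo $p$ producing a common factor are confined to $O(1)$ lines, hence have density $O(1/p^2)$, which is the weaker statement the sieve argument would need to invoke instead.
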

\begin{proof}
If $p\nmid n(n+h)$, then we have $mn^{-1}\equiv (m+g)(n+h)^{-1}\imod p$ and hence $mh\equiv \pm ng\imod p$. Therefore
\[
Q(mh, nh)\equiv Q(\pm ng, nh)\equiv n^2Q(g, \pm h)\imod p.
\]
This implies $p|Q(g, h)$ or $p|Q(g, -h)$. On the other hand if $p|n$, we have $p|am^2$. Thus we either have $p|a$ or $p|m$ and
\[
0\equiv Q(m+g, n+h)\equiv Q(g, h)\imod p.
\]
\end{proof}
\noindent Therefore we can put a restriction $m\equiv r_1, n\equiv r_2\imod W$ where $W$ is the product of all primes less than $\log\log\log X$ and $r_1, r_2$ are integers such that
\[
Q(r_1+g_j, r_2+h_j)\not\equiv0\imod W
\]
for $j=1, 2, ..., k$. This guarantees that $Q(m+g_j, n+h_j)$ are mutually coprime. Define
\[
\begin{split}
S(X, \rho):=&\sum_{\substack{Q(m, n)\sim X\\m\equiv r_1\imod W\\n\equiv r_2\imod W}}\bigg(\sum_{j=1}^k\chip(Q(m+g_j, n+h_j))-\rho\bigg)\\
&\times\bigg(\sum_{\substack{d_1, d_2, ..., d_k\in\N\\d_i|Q(m+g_i, n+h_i)\i}}\lambda(d_1, d_2, ..., d_k)\bigg)^2.
\end{split}
\]
By interchanging the order of summations, we obtain
\[
\begin{split}
S(X, \rho)=&\sum_{\substack{d_1, ..., e_k\in\N\\(d_ie_i, W)=1\i}}\lambda(d_1, ..., d_k)\lambda(e_1, ..., e_k)\sum_{\substack{a_i, b_i\imod {[d_i, e_i]}\\Q(a_i, b_i)\equiv0\imod{[d_i, e_i]}\i}}\\
&\sum_{\substack{Q(m, n)\sim X\\m\equiv r_1\imod W\\n\equiv r_2\imod W\\m\equiv a_i\imod{[d_i, e_i]}\i\\n\equiv b_i\imod{[d_i, e_i]}\i}}\bigg(\sum_{j=1}^k\chip(Q(m+g_j, n+h_j))-\rho\bigg)
\end{split}
\]
Since the $k+1$ integers $W$ and $[d_i, e_i]$ for $i=1, 2, ..., k$ are mutually coprime, the congruence conditions on $m, n$ in the innermost sum can be treated as a single congruence restriction modulo $W\prod_i[d_i, e_i]$. Note that
\begin{equation}
\label{Qarith}
\sum_{\substack{Q(m, n)\le X\\m\equiv a\imod q\\n\equiv b\imod q}}1=\frac{1}{q^2}\frac{2\pi X}{\sqrt{-\Delta}}+O(1).
\end{equation}
By Proposition \ref{BVbqf} and \eqref{Qarith}, we deduce that
\[
S(X, \rho)=\frac{1}{\nu(W)}\frac{\delta_QX}{h(-\Delta)\log X}\cdot\bigg(\sum_{\ell=1}^kS_{2, \ell}\bigg)-\rho\cdot\frac{1}{W^2}\frac{2\pi X}{\sqrt{-\Delta}}\cdot S_1+o(R^2)
\]
where
\[
S_1:=\sum_{\substack{d_1, ..., e_k\in\N\\(d_i, e_j)=1\text{ for $i\neq j$}}}\lambda(d_1, ..., d_k)\lambda(e_1, ..., e_k)\prod_i\frac{\rho([d_i, e_i])}{[d_i, e_i]^2}
\]
and
\[
S_{2, \ell}:=\sum_{\substack{d_1, ..., e_k\in\N\\(d_i, e_j)=1\text{ for $i\neq j$}\\d_\ell=e_\ell=1}}\lambda(d_1, ..., d_k)\lambda(e_1, ..., e_k)\prod_i\frac{\rho([d_i, e_i])}{\nu([d_i, e_i])}.
\]
These two types of sums also appear in many other works on bounded gap between primes and from today's perspective they can be evaluated by fairly standard Fourier-analytic techniques. Here we simply quote the following proposition, which is a special case of Lemma 2.3.1 in \cite{Vatthesis}.
\begin{prop}
\label{Fourier}
Let $g$ be a multiplicative function with
\begin{equation}
\label{gg}
\frac{1}{g(p)}=p+O(p^t)
\end{equation}
with $t<1$. Define
\[
S_1(g)=\sum_{\substack{d_1, ..., e_k\in\N\\(d_i, e_j)=1\text{ for $i\neq j$}}}\lambda(d_1, ..., d_k)\lambda(e_1, ..., e_k)\prod_ig([d_i, e_i])
\]
and
\[
S_{2, \ell}(g)=\sum_{\substack{d_1, ..., e_k\in\N\\(d_i, e_j)=1\text{ for $i\neq j$}\\d_\ell=e_\ell=1}}\lambda(d_1, ..., d_k)\lambda(e_1, ..., e_k)\prod_ig([d_i, e_i]).
\]
Then we have
\[
S_1(g)=(1+o(1))\bigg(\frac{W}{\phi(W)}\bigg)^k\frac{I(\mathcal{F})}{(\log R)^k},
\]
and
\[
S_{2, \ell}(g)=(1+o(1))\bigg(\frac{W}{\phi(W)}\bigg)^k\frac{J_\ell(\mathcal{F})}{(\log R)^{k-1}}.
\]
Here
\[
I(\mathcal{F}):=\int_{\Delta_k}\bigg(\frac{\partial^k\mathcal{F}}{\partial t_1...\partial t_k}\bigg)^2\, dt_1\, dt_2\, ...\, dt_k
\]
and
\[
J_\ell(\mathcal{F}):=\int_{\Delta_{k-1}}\bigg(\frac{\partial^{k-1}\mathcal{F}}{\partial t_1...\partial t_{\ell-1}\partial t_{\ell+1}...\partial t_k}\bigg)^2(t_1, ..., t_{\ell-1}, 0, t_{\ell+1}, ..., t_k)\, dt_1\, dt_2\, ...\,dt_{\ell-1}\, dt_{\ell+1}\, ...\, dt_k.
\]
\end{prop}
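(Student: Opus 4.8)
The plan is to run the by-now standard diagonalisation of Selberg-type sieve sums (cf. \cite{May}), which in this precise form is Lemma~2.3.1 of \cite{Vatthesis}; the role of the hypothesis \eqref{gg} with a fixed $t<1$ is to force every Euler product occurring below to converge absolutely at a rate independent of $W$, so that $g(n)$ may be handled as $1/n$ up to a convergent multiplicative correction. First, recall that $\lambda$ is supported on squarefree tuples with $d_1\cdots d_k\le R$ and --- because of the congruence restrictions the sieve imposes --- on tuples coprime to $W$, so that every $[d_i,e_i]$ appearing in $S_1(g)$ or $S_{2,\ell}(g)$ is squarefree and coprime to $W$. I would then diagonalise: since $g$ is multiplicative, $g([d_i,e_i])=g(d_i)g(e_i)/g((d_i,e_i))$, and for squarefree arguments $1/g((d_i,e_i))=\sum_{r_i\mid(d_i,e_i)}\gamma(r_i)$, where $\gamma$ is the multiplicative function with $\gamma(p)=1/g(p)-1=p-1+O(p^{t})$. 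The conditions $(d_i,e_j)=1$ for $i\neq j$ force the auxiliary variables $r_1,\dots,r_k$ to be pairwise coprime, and detaching them by M\"obius inversion --- the off-diagonal contribution being of lower order, exactly as in \cite{May} --- turns $S_1(g)$ into
\[
S_1(g)=\sum_{\substack{r_1,\dots,r_k\ \text{squarefree, pairwise coprime}\\ (r_i,W)=1}}\Big(\prod_{i=1}^{k}\gamma(r_i)\Big)y_{r_1,\dots,r_k}^{2}\;+\;(\text{lower order}),\qquad y_{r_1,\dots,r_k}:=\sum_{\substack{d_1,\dots,d_k\\ r_i\mid d_i}}\lambda(d_1,\dots,d_k)\prod_{i=1}^{k}g(d_i),
\]
with $S_{2,\ell}(g)$ becoming the same expression subject to the extra constraint $r_\ell=1$ and with the $\ell$-th factor dropped.

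The second step is to evaluate $y_{r_1,\dots,r_k}$ for squarefree, pairwise coprime $r_i$ coprime to $W$. Writing $g(d)=\tilde g(d)/d$ with $\tilde g$ multiplicative and $\tilde g(p)=p\,g(p)=1+O(p^{t-1})$, I would substitute $\lambda(d_1,\dots,d_k)=\prod_i\mu(d_i)\,\mathcal{F}\big(\tfrac{\log d_1}{\log R},\dots,\tfrac{\log d_k}{\log R}\big)$, represent $\mathcal{F}$ through its $k$-fold Fourier--Laplace transform, and carry out the $d_i$-sums one coordinate at a time: each reduces to a Dirichlet series $\sum_{(d,\,r_iW)=1}\mu(d)\tilde g(d)d^{-1-s}$ which, by \eqref{gg}, equals $\zeta(1+s)^{-1}$ times an Euler product that is holomorphic and non-vanishing on $\Re s\ge0$. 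Moving the $k$ contours onto the imaginary axes and using $\zeta(1+s)^{-1}=s+O(s^{2})$ near $s=0$ in each variable then yields
\[
y_{r_1,\dots,r_k}=\Big(\frac{W}{\phi(W)}\Big)^{k}\Big(\prod_{i=1}^{k}\frac{\mu(r_i)\,c(r_i)}{r_i}\Big)\frac{1}{(\log R)^{k}}\,\frac{\partial^{k}\mathcal{F}}{\partial t_1\cdots\partial t_k}\!\Big(\frac{\log r_1}{\log R},\dots,\frac{\log r_k}{\log R}\Big)+(\text{small}),
\]
where $c$ is multiplicative with $c(p)=1+O(p^{t-1})$ and the factor $(W/\phi(W))^{k}$ records the removal of the Euler factors at the primes $p\mid W$; for $S_{2,\ell}(g)$ the partial derivative is the one omitting $t_\ell$, and one variable fewer survives. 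The ``small'' term is negligible after being summed against the weights $\prod_i\gamma(r_i)$.

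Finally I would reassemble the pieces. Since $\gamma(r_i)\,c(r_i)^{2}/r_i^{2}=1/r_i+O(r_i^{t-2})$, multiplying $y^{2}$ by $\prod_i\gamma(r_i)$ reproduces $\prod_i 1/r_i$ up to an absolutely convergent correction, while the constraint $(r_i,W)=1$ on the remaining sum contributes $(\phi(W)/W)^{k}$, so that the net power of $W/\phi(W)$ is $(W/\phi(W))^{2k}\cdot(\phi(W)/W)^{k}=(W/\phi(W))^{k}$. Setting $t_i=\log r_i/\log R$, the sum over squarefree pairwise-coprime $r_i$ coprime to $W$ --- which is supported on $t_1+\cdots+t_k\le1$ by the support of $\mathcal{F}$ --- converges, as $R\to\infty$, to a Riemann integral over $\Delta_k$, so that, after collecting the convergent local Euler products, one arrives at
\[
S_1(g)=(1+o(1))\Big(\tfrac{W}{\phi(W)}\Big)^{k}\frac{I(\mathcal{F})}{(\log R)^{k}},\qquad S_{2,\ell}(g)=(1+o(1))\Big(\tfrac{W}{\phi(W)}\Big)^{k}\frac{J_\ell(\mathcal{F})}{(\log R)^{k-1}}.
\]
The main obstacle is uniformity: since $W=\prod_{p<\log\log\log X}p$ tends to infinity with $X$, one must check that the M\"obius detachment, the contour shifts, the passage from sums to integrals, and the behaviour of $\mathcal{F}$ near the boundary of $\Delta_k$ all contribute genuine $o(1)$ errors uniformly in $W$ in this range --- and this is exactly the point at which the quantitative shape of \eqref{gg}, with a \emph{fixed} $t<1$, is used, since it bounds the rate of convergence of each Euler product above independently of $W$ and of the $r_i$.
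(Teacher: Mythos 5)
The paper itself gives no proof of this proposition: it is quoted verbatim as a special case of Lemma 2.3.1 of Vatwani's thesis, and your sketch reconstructs exactly the Maynard--Selberg diagonalization argument that underlies that lemma (diagonalize via $\gamma(p)=1/g(p)-1$, evaluate the linear forms $y_{r_1,\dots,r_k}$ by contour integration against $\zeta(1+s)^{-1}$, then pass from the $r$-sum to the integral over $\Delta_k$), so in substance your route is the intended one and the outline is sound. Two points deserve to be made explicit if this were written out in full. First, the constant-free asymptotics $(W/\phi(W))^k I(\mathcal{F})/(\log R)^k$ and $(W/\phi(W))^k J_\ell(\mathcal{F})/(\log R)^{k-1}$ are not automatic from ``absolutely convergent Euler corrections'': a generic $g$ with $g(p)=1/p+O(p^{t-2})$ would leave a genuine singular-series constant $\prod_{p\nmid W}\frac{1-g(p)}{1-1/p}$ (and matching local factors in the $r$-sum). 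The reason no such constant survives is that the sieve support forces all these products to run only over $p\nmid W$, i.e. over $p>\log\log\log X$, and the fixed exponent $t<1$ makes the tail product $1+o(1)$; your closing remark about ``uniformity in $W$'' points at this but should be stated as the actual cancellation of the local constants, since this is where the hypothesis on $g$ and the specific choice of $W$ are really used. Second, the M\"obius detachment of the cross conditions $(d_i,e_j)=1$ for $i\neq j$ is not a purely formal step: one needs the standard bound showing the terms with a nontrivial common divisor $s_{ij}>1$ contribute $O\big((\log R)^{-k-1+\varepsilon}\big)$-type savings, as in Maynard; you assert this by reference, which is acceptable in a sketch but is one of the two places where quantitative work is actually required.
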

Since $g(d)=\rho(d)/d^2$ and $g(d)=\rho(d)/\nu(d)$ both satisfy \eqref{gg}, by Proposition \ref{Fourier} we obtain
\[
S(X, \rho)=(1+o(1))\bigg(\frac{W}{\phi(W)}\bigg)^k\frac{X}{(\log R)^k}\bigg(\frac{1}{\nu(W)}\bigg(\frac{\theta}{2}-\delta\bigg)\frac{\delta_QkJ_\ell(\mathcal{F})}{h(-\Delta)}-\frac{\rho}{W^2}\frac{2\pi}{\sqrt{-\Delta}}I(\mathcal{F})\bigg).
\]
Finally since $W^2\ge\nu(W)$, it suffices to find $\rho>1$ such that
\begin{equation}
\label{rho}
\rho<\bigg(\frac{\theta}{2}-\delta\bigg)\frac{\delta_Q\sqrt{-\Delta}}{2\pi h(-\Delta)}\frac{kJ_k(\mathcal{F})}{I(\mathcal{F})}.
\end{equation}
By Theorem 23 of \cite{Poly}, for sufficiently large $k$, we have
\[
\sup_{\mathcal{F}}\frac{kJ_k(\mathcal{F})}{I(\mathcal{F})}\gg\log k.
\]
Thus when $k$ is large enough, the right side of \eqref{rho} is strictly greater than 1. Therefore we can pick $\rho>1$ and this proves Theorem \ref{Vat}.

\bibliographystyle{amsbracket}

\begin{thebibliography}{18}

\bibitem{HR}
H. Halberstam, H.E. Richert, \emph{Sieve methods}, London Mathematical Society Monographs, No. 4. Academic Press, London-New York, 1974. xiv+364 pp.

\bibitem{Hux}
M. N. Huxley, \emph{The large sieve inequality for algebraic number fields. III. Zero-density results.}, J. London Math. Soc. (2) \textbf{3} 1971, 233–240. 

\bibitem{May}
J. Maynard, \emph{Small gaps between primes}, Ann. of Math. (2) 181 (2015), no. 1, 383–413. 

\bibitem{Milne}
J. S. Milne, \emph{Class field theory}, available at https://www.jmilne.org/math/CourseNotes/CFT.pdf.

\bibitem{Poly}
Polymath, D. H. J., \emph{Variants of the Selberg sieve, and bounded intervals containing many primes.}, Res. Math. Sci. 1 (2014), Art. 12, 83 pp. 

\bibitem{Th}
J. Thorner, \emph{Bounded gaps between primes in Chebotarev sets}, Res. Math. Sci. 1 (2014), Art. 4, 16 pp. 

\bibitem{Vatthesis}
A. Vatwani, \emph{Higher rank sieves and applications}, Ph. D. Thesis, 2016.

\bibitem{Vat}
A. Vatwani, \emph{Bounded gaps between {G}aussian primes}, J. of Number Theory \textbf{171} (2017),  449-473. 





\end{thebibliography}
\providecommand{\bysame}{\leavevmode\hbox to3em{\hrulefill}\thinspace}

\end{document}